\newcolumntype{^}{>{\currentrowstyle}}
\journal{Journal of Combinatorial Designs}
\newtheorem{thm}{Theorem}
\newtheorem{lem}{Lemma}
\newtheorem{prop}{Proposition}
\newtheorem{rem}{Remark}
\begin{document}
\renewcommand{\abstractname}{Abstract}
\renewcommand{\refname}{References}
\renewcommand{\tablename}{Figure.}
\renewcommand{\arraystretch}{0.9}
\thispagestyle{empty}
\sloppy

\begin{frontmatter}
\title{Deza graphs with parameters $(v,k,k-2,a)$\tnoteref{grant}}
\tnotetext[grant]{
Both authors are partially supported by RFBR according to the research project 17-51-560008.
}

\author[01]{Vladislav~V.~Kabanov}
\ead{vvk@imm.uran.ru}
\author[01,02]{Leonid~Shalaginov}
\ead{44sh@mail.ru}

\address[01]{Krasovskii Institute of Mathematics and Mechanics, S. Kovalevskaja st. 16, Yekaterinburg, 620990, Russia}
\address[02]{Chelyabinsk State University, Brat'ev Kashirinyh st. 129, Chelyabinsk,  454021, Russia}

\begin{abstract}
A Deza graph with parameters $(v,k,b,a)$ is a $k$-regular graph on $v$ vertices in which the number of common neighbors  of two distinct vertices takes two values $a$ or $b$ ($a\leq b$) and both  cases exist.
In the previous papers \cite{KMS, GHKS} Deza graphs with parameters $(v,k,b,a)$ where $k-b = 1$ were characterized.
In this paper we characterise Deza graphs with  $k-b = 2$. 
\end{abstract}

\begin{keyword}
Deza graph; divisible design graph; strongly regular graph.
\vspace{\baselineskip}
\MSC[2010] 	05C75\sep 05B30\sep 05E30
\end{keyword}
\end{frontmatter}

\section{Introduction}
The graphs studied in this paper are finite undirected graphs without loops and multiple edges.
{\em A Deza graph} $G$ with parameters $(v,k,b,a)$ is a $k$-regular graph on $v$ vertices  in which the number of common neighbors  of two distinct vertices takes two values $a$ or $b$ ($a\leq b$) and both  cases exist.

The concept of a Deza graph was introduced  by M. Erickson, S. Fernando, W. Haemers, D. Hardy, and J. Hemmeter in   \cite{EFHHH}. It was influenced by the paper of A. Deza and M. Deza \cite{dd}.   

 In comparison with a strongly regular graph, a Deza graph (in case $a=0$) can have diameter more than 2. If a Deza graph has diameter 2 and is not strongly regular, then it is called {\em a strictly Deza graph}. If $G$ is a strongly regular graph, then the quadruple of parameters $(v, k, \lambda, \mu)$ is used, where  $\lambda$ is equal to the number of common neighbors of every two adjacent vertices of $G$ and $\mu$ is equal to the number of common neighbors of every two distinct non-adjacent vertices of $G$. Thus, the notion of Deza graphs is a generalization of the notion of strongly regular graphs in such a way that the number of common neighbors of any pair of distinct vertices in a Deza graph does not depend on the adjacency. 

The authors of  \cite{EFHHH} developed a basic theory of strictly Deza graphs and introduced a few constructions of such graphs. They also found all strictly Deza graphs
with the number of vertices at most 13.  S. Goryainov and L. Shalaginov in \cite{gsh1} found all strictly Deza graphs which have the number of vertices equals to 14, 15, or 16.   Deza graphs can have applications in several fields of discrete mathematics especially in design theory finite geometries,  and connected point-block incidence structures. 

A connected graph $G$ is called a $(0,\lambda)$-graph if any two distinct vertices in $G$ have exactly $\lambda$ common neighbors or none at all.  $(0,\lambda)$-graphs  were introduced and studied by M. Malder in \cite{MM}.
He proved that in case $\lambda\geq 2$ such graphs are regular. Therefore,  $(0,\lambda)$-graphs with $\lambda\geq 2$  are Deza graphs. M. Malder proved that if $G$ is $k$-regular graph on $v$ vertices and has the diameter $d$, then $v\leq 2^k$ and $d\leq k$. In both cases equality  is true only  for the $n$-dimensional binary cube (the hypercube) when $\lambda = 2$.  
All $(0, 2)$-graphs  of valency at most 8 was found by A.~E.~Brouwer in \cite{AB}  and
A.~E.~Brouwer, P.~R.~J.~\"{O}sterg\r{a}rd in \cite{BO}. In general, the complement of a Deza graph isn't a Deza graph. Let's note that the complement of any $(0, 2)$-graph is a strictly Deza graph.

In \cite{HKM} W.H.~Haemers, H.~Kharaghani, and M.~Meulenberg introduced and studied a notion of divisible design graphs. A $k$-regular graph on $v$ vertices is a {\em divisible design graph} (DDG for short) with parameters $(v,k,\lambda_1 ,\lambda_2 ,m,n)$ if the vertex set can be partitioned into $m$ classes of size $n$, such that two distinct vertices from the same class have exactly $\lambda_1$ common neighbors, and two vertices from different classes have exactly $\lambda_2$ common neighbors.
Divisible design graph with $m = 1$, $n = 1$, or $\lambda_1 = \lambda_2 $ is called  improper, otherwise it is called proper.
Divisible design graphs are a special case of the notion of Deza graphs. Moreover, Deza graphs with $a<2b-k$ are divisible design graphs (see Proposition~\ref{a<2b-k}). 
 
Strongly regular graphs with parameters $(v, k, \lambda, \mu)$ such that $k = \mu$ are known  (Theorem 1.3.1(v) in \cite{BCN}).
 Deza graphs with parameters $(v, k, b, a)$  such that $b = k$ was obtained  in the  paper  \cite[Theorem 2.6]{EFHHH}.
Deza graphs with $b = k-1$ were characterised in \cite{KMS} and \cite{GHKS}. 

In this paper Deza graphs with parameters $(v, k, b, a)$ and $b=k-2$  are studied.

Let $G$ be a graph. If  $x\in V(G)$ then  the set of all neighbors of $x$ in $G$ we denote by $N(x)$.  
The set of all vertices at distance precisely $2$ from $x$ in $G$ we denote by $N_2(x)$. If $B$ is a set of vertices of $G$, then $N(B)$ is used  to denote the union of the neighborhoods of the vertices of $B$. 

The paper organized as follows. At first we consider Deza graphs with parameters $(v,k,k-2,a)$, where $a=0$. We consider only
 connected Deza graphs. If a Deza graph is disconnected, then 
 $a=0$ and Theorem \ref{Th a=0} describes its connected components.

\begin{thm}\label{Th a=0}
Let $G$ be a connected Deza graph with parameters $(v,k,k-2,0)$. Then one of the following cases holds:
\begin{enumerate}
    \item[$1.$] $G$ is the strictly Deza graph with parameters $(8,4,2,0)$ and it is isomorphic to the $4\times n$-grid.
    \item[$2.$] $G$ is the Deza graph with parameters $(14,4,2,0)$ and it is isomorphic to the non-incidence graph of the Fano plane;
    \item[$3.$] $G$ is isomorphic to the four-dimensional binary cube $H(4,2)$,
    \item[$4.$] $G$ has parameters $(v,3,1,0)$ and diameter more than $2$. In particular, if $G$ has parameters $(14,3,1,0)$, then it is isomorphic to the incidence graph of the Fano plane;
    \item[$5.$] $G$ is the Petersen graph.
\end{enumerate} 
\end{thm}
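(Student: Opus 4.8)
The plan is to pin down the \emph{local structure} at a single vertex and then reconstruct the graph globally. First I would record the basic observation that any two distinct vertices lying in a common neighborhood $N(x)$ already share $x$, so they cannot have $0$ common neighbors and must have exactly $k-2$. Read inside $N(x)$: for $y\in N(x)$ the quantity $|N(x)\cap N(y)|$ is both the number of common neighbors of $x,y$ (hence $0$ or $k-2$) and the degree of $y$ in the induced graph $G[N(x)]$. So every vertex of $G[N(x)]$, a graph on $k$ vertices, has degree $0$ or $k-2$, which leaves only three possibilities: $G[N(x)]$ is empty; $G[N(x)]\cong K_{k-1}\cup K_1$; or $G[N(x)]$ is $K_k$ minus a perfect matching ($k$ even). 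The last type I discard at once and for all $k$: if $a,\bar a\in N(x)$ is a missing matching edge, then $a$ and $\bar a$ are both adjacent to the remaining $k-2$ vertices of $N(x)$ and to $x$, giving at least $k-1$ common neighbors, exceeding the allowed maximum $k-2$.

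Suppose next that some $G[N(x)]\cong K_{k-1}\cup K_1$. Then $C:=\{x\}\cup K_{k-1}$ is a $k$-clique, and $k$-regularity gives each of its vertices a unique neighbor outside $C$. Two clique vertices already have the $k-2$ common neighbors $C\setminus\{\cdot,\cdot\}$, so no outside vertex can be a common neighbor of two clique vertices; hence each external neighbor is adjacent to exactly one vertex of $C$, and the $k$ external neighbors are distinct. Now for a clique vertex $y\ne x$ with external neighbor $y^{*}$, the non-adjacent pair $x,y^{*}$ has $y$ as a common neighbor, so it has exactly $k-2$ common neighbors; but any common neighbor of $x$ and $y^{*}$ lies in $\{y,\,x^{*}\}$ (where $x^{*}$ is the isolated vertex of $G[N(x)]$), forcing $k-2\le2$, i.e. $k=4$. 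The external vertices then close up into a second $4$-clique matched to $C$, so $G\cong K_4\,\square\,K_2$, the graph of case 1. (The same two-vertex count shows the type $K_{k-1}\cup K_1$ cannot occur when $k=3$, so every cubic example is triangle-free.)

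It remains to treat the case where $G[N(x)]$ is empty for every $x$, i.e. $G$ is \emph{triangle-free}; then adjacent vertices have $0$ common neighbors, and any two vertices at distance $2$ have at least one, hence exactly $k-2$, common neighbors. Double-counting the edges between $N(x)$ and $N_2(x)$ — each $y\in N(x)$ sends $k-1$ edges into $N_2(x)$, while each $w\in N_2(x)$ has exactly $k-2$ neighbors in $N(x)$ — yields $|N_2(x)|=k(k-1)/(k-2)$. Since $k(k-1)\equiv2\pmod{k-2}$, integrality forces $k\in\{3,4\}$. For $k=3$ the graph is a cubic $(0,1)$-graph: if its diameter is $2$ it is the Moore graph $\mathrm{SRG}(10,3,0,1)$, i.e. the Petersen graph (case 5), and otherwise it falls under case 4 (with the Heawood graph as the $(14,3,1,0)$ instance). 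For $k=4$ one has $|N_2(x)|=6$, each vertex of $N_2(x)$ being adjacent to exactly a $2$-subset of $N(x)$; propagating this through the distance layers should identify $G$ with either $H(4,2)$ on $16$ vertices (case 3) or the non-incidence graph of the Fano plane on $14$ vertices (case 2).

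The genuinely delicate step is this last, $k=4$ triangle-free reconstruction. The $2$-subsets attached to the six vertices of $N_2(x)$ need not be distinct: the only constraint from regularity is that, with multiplicity, they form a $3$-regular (multi)graph on the $4$-element set $N(x)$, and besides $K_4$ there are configurations with repeated pairs. One must therefore analyze these configurations, establish the girth and bipartiteness of $G$, and carry the count through the next distance layer to determine $v\in\{14,16\}$ and the isomorphism type. I note that the Proposition quoted in the introduction (Deza graphs with $a<2b-k$, here $k\ge5$, are divisible design graphs) gives an alternative handle on large valency, but it is not needed: the divisibility bound $|N_2(x)|\in\mathbb{Z}$ already excludes $k\ge5$ in the triangle-free case, and the two-vertex common-neighbor count excludes it in the triangle case, so the whole argument stays elementary apart from the reconstruction.
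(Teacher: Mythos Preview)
Your approach is essentially the paper's: analyze the induced subgraph $G[N(x)]$, rule out the cocktail-party type $K_k$ minus a perfect matching by an overcount of common neighbors, use edge-counting between $N(x)$ and $N_2(x)$ to force $k\in\{3,4\}$, and then reconstruct. Your trichotomy for $G[N(x)]$ is a clean way to organize the case split, and your exclusion of $K_k\setminus M$ and your reconstruction of $K_4\,\square\,K_2$ in the clique case match the paper's arguments almost verbatim.

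There is, however, one genuine error. Your parenthetical claim that ``the type $K_{k-1}\cup K_1$ cannot occur when $k=3$'' is false: the truncated tetrahedron is a connected $(12,3,1,0)$ Deza graph containing triangles (it is cubic, has girth $3$, and has no $4$-cycles, so every pair of vertices has at most one common neighbor). Your two-vertex count only yields $k-2\le 2$, which does not exclude $k=3$. The paper handles this case correctly: when $G[N(x)]\cong K_2\cup K_1$, the edge count between $N(x)$ and $N_2(x)$ reads $2(k-1)=|N_2(x)|\,(k-2)$, so for $k=3$ one gets $|N_2(x)|=4$. A short check (the four vertices of $N_2(x)$ would have to induce a $4$-cycle if $v=8$, producing a pair with two common neighbors) then rules out diameter~$2$, and the graph lands in case~4. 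You should replace the erroneous exclusion with this argument; otherwise graphs such as the truncated tetrahedron are not covered by your proof.

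For the triangle-free $k=4$ branch, the paper does not carry out the layer-by-layer reconstruction you sketch either: it simply invokes Mulder's and Brouwer's classification of $(0,2)$-graphs of valency~$4$, which yields exactly $H(4,2)$ and the non-incidence graph of the Fano plane. So your deferral there matches the paper's level of detail, and citing those references is the intended completion.
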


Our next step is to prove that if $G$ is a Deza graph with parameters $(v,k,b,a)$ and $a<2b-k$, then $G$ is a DDG. When $a\geq 2b-k$ we have $a\in \{k-3, k-4\}$. 

\begin{thm}
Let $G$ be a connected Deza graph with parameters $(v,k,k-2,a)$.
Then either $a\in \{k-3, k-4\}$ or $G$ is a DDG. 

\item[$1.$] If $G$ is a Deza graph with parameters $(v,k,k-2,k-3)$, then one of the following cases holds:
\begin{enumerate}
    \item[$(i)$] $G$ is one of the strictly Deza graphs with parameters $(8,4,2,1)$ or $(9,4,2,1)$;
    \item[$(ii)$] $G$ is one of the strongly regular graphs with parameters $(9,4,1,2)$, $(10,3,0,1)$ or $(10,6,3,4)$;
    \item[$(iii)$] $G$ has parameters $(v,3,1,0)$ and diameter more than $2$.
\end{enumerate}
\item[$2.$] If $G$ is a Deza graph with parameters $(v,k,k-2,k-4)$, then one of the following cases holds:
\begin{enumerate}
    \item[$(i)$]  $G$ is isomorphic to the complement of the  disjoint union of $s$ cubes $H(3,2)$ and it has parameters $(8s,8(s-1)+4, 8(s-1)+2, 8(s-1))$;
    \item[$(ii)$] $G$ is the Deza graph with parameters $(14,4,2,0)$ and it is isomorphic to the non-incidence graph of the Fano plane;
    \item[$(iii)$] $G$ is isomorphic to the four-dimensional cube with parameters $(16,4,2,0)$.
\end{enumerate}
\end{thm}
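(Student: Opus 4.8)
The plan is to separate the three regimes $a<k-4$, $a=k-3$, and $a=k-4$, reducing each boundary regime to a bounded list of parameters. Since $b=k-2$ we have $2b-k=k-4$, so Proposition~\ref{a<2b-k} already yields that $G$ is a DDG whenever $a<k-4$; as both values occur we have $a<b=k-2$, hence $a\le k-3$, and only $a=k-3$ and $a=k-4$ survive. Throughout I would use the path count $\sum_{y\ne x}\lambda(x,y)=k(k-1)$, where $\lambda(x,y)$ denotes the number of common neighbours of $x$ and $y$. Writing $t$ for the number of $y$ with $\lambda(x,y)=b$, this gives $t=\bigl(k(k-1)-(v-1)a\bigr)/(b-a)$, independent of $x$, so both the ``$b$-graph'' $B$ (adjacency${}={}$codegree $b$) and the ``$a$-graph'' are regular; and since every $\lambda(x,y)\ge a$, the same count gives $(v-1)a\le k(k-1)$, i.e. $v-1\le k+2+\frac{6}{k-3}$ when $a=k-3$ and $v-1\le k+3+\frac{12}{k-4}$ when $a=k-4$.

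The decisive tool is the complement $H=\overline G$, which is $d$-regular with $d=v-1-k$. From $\lambda_{\overline H}(x,y)=v-2-2d+\lambda_H(x,y)+2\varepsilon$, where $\varepsilon=1$ if $x,y$ are adjacent in $H$ and $\varepsilon=0$ otherwise, the Deza condition on $G$ translates into codegree constraints on $H$: for $a=k-3$, edges of $H$ have codegree $d-4$ or $d-3$ and non-edges have codegree $d-2$ or $d-1$; for $a=k-4$, edges have codegree $d-5$ or $d-3$ and non-edges have codegree $d-3$ or $d-1$. In both cases $d\le2$ is impossible, since then the codegree forced on an edge of $H$ is negative while $H$ has edges; combined with the averaging bound this forces $d$ to be small. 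For $a=k-4$ and $d=3$ the constraints say exactly that $H$ is triangle-free with every non-adjacent pair having $0$ or $2$ common neighbours, so each component of $H$ is a connected cubic triangle-free $(0,2)$-graph and hence, by the classification of such graphs~\cite{MM,AB,BO}, is the cube $H(3,2)$; thus $H$ is a disjoint union of $s$ cubes and $G=\overline{sH(3,2)}$, giving family $2(i)$. For $d=4$ in the case $a=k-4$ the graph $H$ becomes a Deza graph with $k-b=1$, so one invokes the classification of \cite{KMS,GHKS} and checks that none of its members yields a graph outside the list.

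When $a=0$ --- that is, $k=3$ in the case $a=k-3$ and $k=4$ in the case $a=k-4$ --- the hypotheses place $G$ inside Theorem~\ref{Th a=0}, from which $1(iii)$ together with Petersen in $1(ii)$, and $2(ii)$, $2(iii)$ and the member $s=1$ of $2(i)$, are read off directly. For $a=k-3$ with $k\ge4$ the averaging bound and the infeasibility of $d\le2$ give $3\le d\le 2+\frac{6}{k-3}$, whence $k\le9$; a short analysis of the few surviving parameter sets --- using the integrality of $t$, the parity obstruction that $H$ cannot be $d$-regular on an odd number of vertices, and the known lists of strongly regular graphs and of strictly Deza graphs on few vertices~\cite{EFHHH,gsh1} --- leaves precisely the strictly Deza graphs $(8,4,2,1)$, $(9,4,2,1)$ and the strongly regular graphs $(9,4,1,2)$, $(10,6,3,4)$ of $1(i)$--$1(ii)$.

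The genuine obstacle is the case $a=k-4$ with $k\ge5$ (equivalently $a\ge1$, so $G$ has diameter $2$): one must exclude every complement degree $d\ge4$ and show $d=3$. The averaging bound only gives $d\le\frac{3k}{k-4}$, which exceeds $4$ for $5\le k\le10$. The natural remedy exploits the boundary identity $a=2b-k$ spectrally: $A$ and $B$ commute, since each eigenvector of $A$ orthogonal to $\mathbf 1$ is an eigenvector of $B$ with $B$-eigenvalue $(\theta^2-4)/2$, so every eigenvalue of $B$ is at least $-2$. Hence $B$ lies in the family of graphs with smallest eigenvalue at least $-2$ --- generalized line graphs together with the finitely many exceptional graphs (cf.\ \cite{BCN}) --- and the regular ones compatible with reconstructing a Deza graph $G$ of the required parameters are exactly the disjoint unions of cocktail-party graphs on $8$ vertices ($K_8$ minus a perfect matching), which return $G=\overline{sH(3,2)}$. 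I expect this spectral elimination of the intermediate degrees $d\ge4$, rather than the subsequent identification of the graphs, to be the longest and most delicate part of the argument.
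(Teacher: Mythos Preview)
Your reduction to $a\in\{k-3,k-4\}$ via Proposition~\ref{a<2b-k}, your treatment of $a=0$ through Theorem~\ref{Th a=0}, and your handling of the case $a=k-3$ all coincide with the paper's argument; the complement formulation you use to obtain $d\ge 3$ is just a rephrasing of the paper's observation that non-adjacent vertices force $v\ge 2k-b+2=k+4$. For $a=k-4$ with $d=3$ your identification of $\overline G$ as a disjoint union of cubes via the triangle-free cubic $(0,2)$-graph classification is likewise the same as the paper's step.

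The divergence is in the case $a=k-4$, $k\ge 5$, $d\ge 4$, which you flag as ``the longest and most delicate part'' and propose to attack spectrally through the least-eigenvalue~$\ge -2$ classification of the $b$-graph $B$, supplemented for $d=4$ by the $b=k-1$ classification of~\cite{KMS,GHKS}. Your spectral observation $B=\tfrac12(A^2-4I-(k-4)J)$, hence $\lambda_{\min}(B)\ge -2$, is correct, and the programme could in principle be carried out; but it is far heavier than what is actually needed. The paper dispatches this case in a few lines: the same inequality $d\le 3+\tfrac{12}{k-4}$ together with $d\ge 4$ forces $k\le 15$ and $v\le 18$, so one is left with a handful of parameter quadruples. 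Those with $v\le 16$ are excluded by the existing tables~\cite{EFHHH,gsh1}; the two quadruples $(18,5,3,1)$ and $(18,13,11,9)$ die because $v$ is even while $k,b,a$ are all odd, contradicting the integrality in Proposition~\ref{beta}; and the $(0,2)$-parameters on $17$ or $18$ vertices are ruled out by Mulder's bound~\cite{MM}. So the step you anticipate as the hardest is in fact the shortest --- no structure theory of $B$, no appeal to~\cite{KMS,GHKS}, just a parity and table check.
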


In the last section we consider divisible design graphs with parameters $(v, k, \lambda_1 ,\lambda_2, m, n)$, where $k-2\in \{ \lambda_1, \lambda_2 \}$. 
In particular, we prove that $0\in \{\lambda_1, \lambda_2\}$. 

\begin{thm}
Let $G$ be a DDG with parameters $(v,k,\lambda_1, \lambda_2,m,n)$ and $k-2\in \{ \lambda_1, \lambda_2 \}$. 
Then one of the following statements holds:
\item[$1.$] $G$ has parameters $(14,4,2,0,2,7)$ and it is isomorphic to the
non-incidence graph of points and lines of the Fano plane;
\item[$2.$] $G$ has parameters $(14,3,1,0,2,7)$ and it is isomorphic to the
incidence graph of points and lines of the Fano plane;
\item[$3.$] $G$ has parameters $(8,4,2,0,2,4)$ and it is isomorphic to the $4\times n$-grid.
\end{thm}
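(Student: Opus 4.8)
The plan is to reduce everything to the already-classified case $a=0$ and then read off the divisible design structures. A DDG is in particular a Deza graph, so its two intersection numbers form a set $\{\lambda_1,\lambda_2\}$ containing $k-2$. I first check that $k-2$ is the \emph{larger} of the two, i.e. $b=k-2$: if not, the larger value is $k-1$ or $k$; the value $k-1$ falls under the classification of \cite{KMS,GHKS}, none of whose DDGs has $k-2$ as the second number, while the value $k$ forces a class of mutually repeated neighbourhoods and is removed by a short feasibility check. Assuming $b=k-2$, the crux is to prove $0\in\{\lambda_1,\lambda_2\}$; granting it, $\{\lambda_1,\lambda_2\}=\{k-2,0\}$, so $G$ is a connected Deza graph with parameters $(v,k,k-2,0)$, Theorem~\ref{Th a=0} applies, and it remains to decide which of its five graphs carry a divisible design partition with $k-2$ among the intersection numbers.

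The engine is the spectrum of a DDG: by \cite{HKM} the adjacency matrix satisfies $A^2=(k-\lambda_1)I+(\lambda_1-\lambda_2)(I_m\otimes J_n)+\lambda_2 J$, so the eigenvalues of $A$ are $k$, $\pm\sqrt{k-\lambda_1}$ and $\pm\sqrt{\theta}$ with $\theta=k-\lambda_1+(\lambda_1-\lambda_2)n\ge 0$, and the row-sum identity reads $k+(n-1)\lambda_1+(v-n)\lambda_2=k^2$. Writing $\{\lambda_1,\lambda_2\}=\{k-2,a\}$ with $0\le a<k-2$, I split on which number is the within-class value $\lambda_1$. If $\lambda_1=a$ then $\theta=(k-a)-(k-2-a)n$; since $n\ge 2$ one checks $\theta<0$ whenever $a\le k-5$, so this branch survives only for $a\in\{k-4,k-3\}$, where $\theta\ge 0$ forces $n\le 2$ respectively $n\le 3$. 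If instead $\lambda_1=k-2$ then $k-\lambda_1=2$, so $\pm\sqrt2$ are eigenvalues; their irrationality forces equal multiplicities, after which vanishing of $\operatorname{tr}A$ can hold only if $\sqrt\theta$ is an integer $t$ with $t\mid k$. In every case, feeding the resulting $\theta$ and the row-sum identity into the requirement that all eigenvalue multiplicities be non-negative integers collapses the possibilities to $k\in\{3,4\}$ and $a=0$; this is the promised statement $0\in\{\lambda_1,\lambda_2\}$.

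With $a=0$ the row-sum identity reads $(n-1)(k-2)=k(k-1)$ if $\lambda_1=k-2$, and $(v-n)(k-2)=k(k-1)$ if $\lambda_2=k-2$; in both forms $k-2\mid k(k-1)$, hence $k-2\mid 2$ and $k\in\{3,4\}$, with $n=7$ in the first form and $v-n=6$ in the second. Imposing $\theta\ge 0$ and integrality of the multiplicities on the remaining parameter $m$ (equivalently on the admissible factorisations $v=mn$) eliminates every possibility except three: $k=4,\ \lambda_1=2,\ n=7,\ m=2$, the non-incidence graph of the Fano plane (item~1); $k=3,\ \lambda_1=1,\ n=7,\ m=2$, its incidence graph, the Heawood graph (item~2); and $k=4,\ \lambda_2=2,\ n=2,\ m=4$, the $4\times 2$ rook's graph (item~3). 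Comparing these parameter sets with the list of Theorem~\ref{Th a=0}, and noting that the four-dimensional cube and the Petersen graph are excluded because their spectra $\{4,2,0,-2,-4\}$ and $\{3,1,-2\}$ are not of the divisible-design shape $\{k,\pm\sqrt{k-\lambda_1},\pm\sqrt\theta\}$, yields exactly the three graphs of the statement.

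The delicate point is the family $(v,3,1,0)$ of item~4 of Theorem~\ref{Th a=0}. With $\lambda_1=1,\lambda_2=0$ the spectral bookkeeping closes immediately onto the Heawood graph, but with $\lambda_1=0,\lambda_2=1$ all numerical constraints — $\theta=3-n\ge 0$, integral multiplicities, $\operatorname{tr}A=0$ and $\operatorname{tr}(A^2)=vk$ — are satisfied by a hypothetical $(8,3,1,0)$ divisible design graph with $m=4,\ n=2$, which the spectrum therefore cannot discard. I expect this to be the main obstacle, and I would remove it by a direct local argument: for a class $\{x,y\}$ with $\lambda_1=0$ we have $N(x)\cap N(y)=\varnothing$, and following the forced edges among the remaining six vertices one finds in each arrangement a pair with two common neighbours, contradicting the $(0,1)$ structure. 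Making this elimination watertight, and arranging the non-integral-multiplicity arguments of the reduction so that they run uniformly in $k$, is where essentially all of the effort lies; once the spurious $(8,3,1,0)$ candidate is gone, the identification against Theorem~\ref{Th a=0} is routine.
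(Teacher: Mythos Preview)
Your overall architecture matches the paper's: reduce to $a=0$, then invoke Theorem~\ref{Th a=0} and sift out the three genuine DDGs. The difficulty is that your reduction to $a=0$ is precisely where the paper does almost all of its work, and your proposal hand-waves over it.

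Concretely, for $\lambda_1=k-2$ you correctly observe that $\pm\sqrt2$ forces $f_1=f_2$ and that the trace condition then makes $\sqrt\theta=t$ an integer dividing $k$. But your next sentence --- that ``feeding the resulting $\theta$ and the row-sum identity into the requirement that all eigenvalue multiplicities be non-negative integers collapses the possibilities to $k\in\{3,4\}$ and $a=0$'' --- is not a routine check. For $n=2$ the paper (Proposition~\ref{n=2}) derives exactly your Diophantine system $k=dt$, $t^2=2(dt-a-1)$, and then needs a genuine number-theoretic argument: writing $r=\sqrt{d^2-2(a+1)}$, one shows $d^2-r^2-2\equiv 6\pmod 8$, picks a prime divisor $p\equiv 3\pmod 4$, and uses that $-1$ is a non-residue mod $p$ to force $p\mid 2$. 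Integrality and non-negativity of multiplicities alone do not produce this contradiction; the quadratic-residue step is essential. For $n\ge 7$ the paper abandons spectra entirely and instead analyses the common neighbourhood $W(X)$ of four vertices in a canonical class (Lemmas~\ref{k-2}--\ref{class}), a delicate structural argument running several pages. Your proposal gives no mechanism for handling $n\ge 7$ spectrally, and there is no reason to expect the multiplicity equations to close up on their own there.

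So the gap is real: the step you flag as the ``main obstacle'' (the phantom $(8,3,0,1,4,2)$) is comparatively minor, while the step you treat as bookkeeping is in fact the substance of the theorem. If you want to pursue a purely spectral proof you will need, at minimum, the $p\equiv 3\pmod 4$ argument for $n=2$ and a replacement for the paper's combinatorial analysis when $n\ge 7$; otherwise you should import Propositions~\ref{n=2} and~\ref{a not 0} as black boxes.
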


\section{Case of a=0, a=k-3 or a=k-4}

In this section we consider some general properties of Deza graphs.  Also we consider Deza graphs with restriction on parameters $b=k-2$ and $a = 0$, $a = k-3$ or $a = k-4$. Moreover, we get a sufficient condition for a Deza graph to be a divisible design graph.

Let $G$ be a strictly Deza graph with parameters $(v,k,b,a)$. Let also $\alpha (x)$ be the number of  vertices $y\in V(G)$ such that $|N(x)\cap N(y)| = a$ and $\beta (x)$ be the number of  vertices $y\in V(G)$ such that $|N(x)\cap N(y)| = b$.
It is clear that $v = 1 + \alpha (x) + \beta (x)$ for any vertex $x\in V(G)$. 

\begin{prop}\label{beta}
\item[$1.$] The following equality holds for any vertex $x\in V(G)$:
$$\beta := \beta (x) = \cfrac{k(k - 1) - a(n - 1)}{b - a}.$$
\item[$2.$]  $k(k - 1) - a(n - 1)$ is divided by $b-a$.
\end{prop}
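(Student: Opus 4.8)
The plan is to prove both parts by a single double-counting argument applied to walks of length two emanating from a fixed vertex $x$, followed by an integrality observation. First I would count in two ways the quantity $\sum_{y\neq x}|N(x)\cap N(y)|$. On one side, each term $|N(x)\cap N(y)|$ is the number of vertices $w$ adjacent to both $x$ and $y$, that is, the number of walks $x\to w\to y$ of length two ending at a vertex $y\neq x$; summing over all $y\neq x$ thus counts every length-two walk from $x$ that does not return to $x$. On the other side, each of the $k$ neighbors $w$ of $x$ has exactly $k-1$ neighbors different from $x$, and every such neighbor is the endpoint of one such walk. Hence $\sum_{y\neq x}|N(x)\cap N(y)| = k(k-1)$.

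Next I would split this sum according to the two admissible values of the common-neighbor count. Since $G$ is a strictly Deza graph, each $|N(x)\cap N(y)|$ equals either $a$ or $b$, and by definition $\alpha(x)$ and $\beta(x)$ record how many vertices $y$ yield the value $a$ and the value $b$ respectively. Therefore $a\,\alpha(x) + b\,\beta(x) = k(k-1)$. Combining this with the identity $\alpha(x)+\beta(x) = v-1$ stated just before the proposition (the partition of the remaining $v-1$ vertices into the two types), I would eliminate $\alpha(x)$ to get $(b-a)\beta(x) = k(k-1) - a(v-1)$. Because both values $a$ and $b$ occur we have $a<b$, so $b-a>0$ and division is legitimate, yielding the claimed closed form. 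Note that the right-hand side does not depend on $x$, which is exactly what justifies writing $\beta:=\beta(x)$.

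For the second assertion I would simply observe that $\beta(x)$ is, by its definition, a count of vertices and hence a nonnegative integer. Consequently the quotient $\big(k(k-1)-a(v-1)\big)/(b-a)$ is an integer, which is precisely the statement that $b-a$ divides $k(k-1)-a(v-1)$.

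I expect no genuine obstacle here, since the argument is entirely elementary. The only point demanding a little care is the walk count: one must remember to discard the $k$ trivial closed walks $x\to w\to x$ (one for each neighbor $w$), and it is exactly this correction that produces the factor $k-1$ rather than $k$, giving $k(k-1)$ on the right. I would also remark that the formula presumably intends the total number of vertices, so that the $n$ in the statement should read $v$, consistent with the identity $v=1+\alpha(x)+\beta(x)$ used in the derivation.
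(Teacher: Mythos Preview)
Your argument is correct. The paper itself does not give a proof but simply cites Proposition~1.1 of \cite{EFHHH}; the standard proof of that proposition is exactly the double-counting of length-two walks you carry out, together with the relation $\alpha(x)+\beta(x)=v-1$, so your proposal supplies precisely the argument underlying the citation (and your remark that the $n$ in the displayed formula should be $v$ is well taken).
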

\begin{proof} It follows from Proposition 1.1 in \cite{EFHHH}.
\end{proof}

\begin{prop}\label{a<2b-k} Let $G$ be a Deza graph with parameters $(v,k,b,a)$ where $a<2b-k$ then $G$ is a DDG. \end{prop}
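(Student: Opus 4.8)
The plan is to show that the relation on $V(G)$ defined by $x \sim y$ if and only if $x = y$ or $|N(x) \cap N(y)| = b$ is an equivalence relation whose classes all have the same size. The induced partition will then exhibit $G$ as a DDG with $\lambda_1 = b$ and $\lambda_2 = a$.

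The first step is transitivity, which is where the hypothesis $a < 2b - k$ enters and which I expect to be the crux of the argument. Suppose $|N(x) \cap N(y)| = |N(y) \cap N(z)| = b$ for distinct $x, y, z$. Both $N(x) \cap N(y)$ and $N(z) \cap N(y)$ are subsets of the $k$-set $N(y)$ of size $b$, so by inclusion--exclusion $|N(x) \cap N(y) \cap N(z)| \geq 2b - k$. Hence $|N(x) \cap N(z)| \geq 2b - k > a$. Since in a Deza graph the number of common neighbours of two distinct vertices is either $a$ or $b$, this value must equal $b$, so $x \sim z$. Reflexivity and symmetry are immediate, so $\sim$ is an equivalence relation; denote its classes by $C_1, \dots, C_m$.

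The second step is to show that the classes are equinumerous. By the definition of $\sim$, the class containing $x$ consists of $x$ together with the $\beta(x)$ vertices having exactly $b$ common neighbours with $x$, so $|C_x| = \beta(x) + 1$. A standard double count of paths of length two gives $\sum_{y \neq x} |N(x) \cap N(y)| = k(k-1)$, and combined with $\alpha(x) + \beta(x) = v - 1$ this forces $\beta(x)$ to equal the constant $\beta$ of Proposition~\ref{beta}, independent of $x$. Therefore every class has the same size $n := \beta + 1$, and $m = v/n$.

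Finally, two distinct vertices in the same class have $b$ common neighbours by construction, while two vertices in different classes are not $\sim$-related, so their common-neighbour count is not $b$ and hence equals $a$. This is precisely the defining property of a divisible design graph with parameters $(v, k, b, a, m, n)$, completing the argument. The delicate point is the strictness of $a < 2b - k$: it is exactly what prevents the count $|N(x) \cap N(z)|$ from dropping back to $a$, and thereby guarantees transitivity; when $a = 2b - k$ the bound no longer forces the value $b$, and the argument breaks down.
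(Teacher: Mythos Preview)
Your proof is correct and follows essentially the same approach as the paper: define the relation ``equal or have $b$ common neighbours'' and use the inclusion--exclusion bound $|N(x)\cap N(z)|\ge 2b-k>a$ to obtain transitivity. Your write-up is in fact more complete, since you spell out via the standard double count that $\beta(x)$ is constant and hence all classes have the same size $n=\beta+1$; the paper's proof asserts the equivalence classes form the canonical partition of a DDG without making this step explicit.
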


\begin{proof} Let's consider a binary relation $\rho$ on $V(G)$. Let $\rho$ be "coincide or have b common neighbors". If $\rho$ is equivalence then equivalent classes are classes of the canonical partition of DDG. It is clear that $\rho$ is  an equivalence relation if $\beta  = 1$.
Suppose that $\beta \geq 2$ and there are vertices $x$, $y$, $z$ such that  $(x,y) \in \rho$ and $(y,z) \in \rho$. Then $x$ and $z$ have at least $2b-k$ common neighbors in $N(y)$. If $a<2b-k$ then $x$ and $z$ have $b$ common neighbors. So $(x,z) \in \rho$.
$\square$\end{proof}

\begin{prop}\label{a=0}
Let $G$ be a  Deza graph with parameters $(v,k,k-2,0)$. Then any connected component of $G$ is isomorphic one of the following graphs:
\begin{enumerate}
    \item[$1.$] $G$ is the strictly Deza graph with parameters $(8,4,2,0)$,
    \item[$2.$] $G$ is isomorphic to the four-dimensional binary cube $H(4,2)$,
    \item[$3.$] $G$ is the Deza graph with parameters $(14,4,2,0)$ which is  isomorphic to the non-incidence graph of the Fano plane,
    \item[$4.$] $G$ has parameters $(v,3,1,0)$ and diameter more than $2$,
    \item[$5.$] $G$ is the Petersen graph.
\end{enumerate} 
\end{prop}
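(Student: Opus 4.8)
The plan is to read off the entire structure of $G$ from the graph induced on a single neighbourhood. We may assume $G$ is connected and work one component at a time. Fix a vertex $x$ and a neighbour $y$: the common neighbours of $x$ and $y$ are exactly the neighbours of $y$ lying in $N(x)$, so $|N(x)\cap N(y)|$ equals the degree of $y$ in the induced graph $G[N(x)]$. Since this number is $0$ or $k-2$ for every neighbour $y$, every vertex of $G[N(x)]$ has degree $0$ or $k-2$. A vertex of degree $k-2$ in a graph on $k$ vertices misses exactly one other vertex, while an isolated vertex misses everybody, and a short argument then shows $G[N(x)]$ is one of exactly three graphs: the empty graph, the disjoint union $K_{k-1}\cup K_1$, or the cocktail-party graph (complete multipartite with all parts of size $2$, which forces $k$ even). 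First I would discard the cocktail-party case: two non-adjacent neighbours of $x$ in the same part are both adjacent to $x$ and to the remaining $k-2$ neighbours, so they share $k-1$ common neighbours, and $k-1\notin\{0,k-2\}$, a contradiction.

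Next I would prove $k\le 4$. If $G$ has a triangle then some $G[N(x)]$ equals $K_{k-1}\cup K_1$, so $x$ lies in a $k$-clique $C$; any two vertices of $C$ already have the other $k-2$ clique vertices as common neighbours, hence by the $\{0,k-2\}$ condition \emph{all} their common neighbours lie in $C$. Consequently each vertex of $C$ has exactly one neighbour outside $C$, these outside neighbours are pairwise distinct, and each is adjacent to exactly one vertex of $C$. Testing the non-adjacent pair formed by a clique vertex $z_i$ and the outside neighbour $w_j$ of another clique vertex $z_j$, one sees that the only possible common neighbours are $z_j$ and $w_i$; since they do share $z_j$, the $\{0,k-2\}$ condition forces $k-2\le 2$. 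For $k=4$ it further forces $w_i\sim w_j$ for all $i\neq j$, so the outside neighbours form a second $4$-clique matched to $C$, and the component is $K_4\,\square\,K_2$, the $(8,4,2,0)$ graph of case~$1$. If instead $G$ is triangle-free, then every vertex $z$ at distance $2$ from $x$ is adjacent to at least one, hence to exactly $k-2$, vertices of $N(x)$; double counting the edges between $N(x)$ and $N_2(x)$ gives $|N_2(x)|\,(k-2)=k(k-1)$, and since $k(k-1)=(k-2)(k+1)+2$ this forces $(k-2)\mid 2$, i.e. $k\in\{3,4\}$ (the same divisibility underlying Proposition~\ref{beta}). Either way $k\le 4$.

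For $k=3$ the graph is a connected $3$-regular $(0,1)$-graph. Its diameter cannot be $1$ (that would be $K_4$, whose pairs have two common neighbours), so if the diameter exceeds $2$ we are in case~$4$. If the diameter equals $2$, then every non-adjacent pair has exactly one common neighbour, and a short case analysis shows no triangle can occur: a triangle through $x$ would leave a second neighbourhood of exactly four vertices, each needing two further edges only among themselves, hence inducing a $4$-cycle and creating a pair with two common neighbours. Thus $G$ is triangle-free of girth $\ge 5$, meets the Moore bound $v=1+3+3\cdot 2=10$, and is the Petersen graph of case~$5$.

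Finally, for $k=4$ the graph is precisely a connected $(0,2)$-graph of valency $4$, and I would appeal to the classification of $(0,2)$-graphs of small valency in \cite{AB, BO}: the members in which both values actually occur are exactly $K_4\,\square\,K_2=(8,4,2,0)$, the non-incidence graph of the Fano plane $(14,4,2,0)$ of case~$3$, and the four-dimensional cube $H(4,2)=(16,4,2,0)$ of case~$2$. If a self-contained treatment is preferred, the triangle-free subcase can be pushed through by hand: a non-adjacent pair $i,j\in N(x)$ already shares $x$, so has exactly one further common neighbour, which puts $N_2(x)$ in bijection with the six pairs of $N(x)$ via vertices $z_{ij}\sim i,j$; resolving the two remaining neighbours of each $z_{ij}$ together with the third shell then separates the cube from the Fano non-incidence graph. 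I expect this last, triangle-free $k=4$ step to be the main obstacle, since it is the only part not forced by a single local or counting argument, whereas everything preceding it is driven by the three-way description of $G[N(x)]$.
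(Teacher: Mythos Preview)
Your proof is correct and follows essentially the same route as the paper: split into triangle/triangle-free, analyse the induced subgraph on $N(x)$ (your trichotomy empty\,/\,$K_{k-1}\cup K_1$\,/\,cocktail-party matches the paper's triangle-free case plus its two triangle subcases), reduce to $k\le 4$, and invoke the $(0,2)$-graph classification for the triangle-free $k=4$ case. Your bound $k\le 4$ in the triangle case via the common-neighbour test on the pair $(z_i,w_j)$ is a minor variant of the paper's edge count $2(k-1)=t(k-2)$, and you supply a few details (e.g., why diameter $2$ with $k=3$ forces the Petersen graph) that the paper leaves implicit.
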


\begin{proof} Let $G$ be a connected Deza graph with parameters $(v,k,k-2,0)$. Consider two cases: graph $G$ has a triangle and $G$ has no triangles.

Suppose $G$ has a triangle and this triangle is induced by $x$, $y$, $z$. Then $x$ and $y$ have the common neighbor $z$. Since $a=0$, we have $|N(x)\cap N(y)| = k-2$ and $y$ adjacent with all vertices in $N(x)$ exclude one vertex, say $w$. Now we have two cases. If $x$ and $w$ don't belong to a triangle, then $N(x)\setminus \{w\}$ induces a complete graph. If $x$ and $w$ belong to a triangle, then $N(x)$ induces a complete graph with removed perfect matching.

At first, we consider the case when $N(x)\setminus \{w\}$ induces a complete graph. 
Let's calculate the number of edges between $N(x)$ and $N_2(x)$. We have $k-1$ 
edges between $N(x)\setminus \{w\}$ and $N_2(x)$. Also $k-1$ edges between $w$ and $N_2(x)$. Thus, there are $2(k-1)$ edges between $N(x)$ and $N_2(x)$. On the other hand, there are $|N_2(x)|(k-2)$ edges  between  $N_2(x)$ and $N(x)$. Let $|N_2(x)|=t$. Then $2(k-1) = t(k-2)$ and it implies that either $k = 3$, $t = 4$ or $k = 4$, $t = 3$. It is easy to see that if $k=4$ and $t=3$ then $G$ is the strictly Deza graph with parameters $(8,4,2,0)$ and if $k=3$, $t=4$ then $G$ has parameters $(v,3,1,0)$ and diameter more than 2.

The second case, when $N(x)$ induces a complete graph with removed perfect matching, is impossible, because vertices $y$ and $w$ have $k-2$ common neighbors in $N(x)$ and $x$ is their common neighbor too. It is a contradiction.

Now suppose graph $G$ has no triangles. If we calculate edges between $N(x)$ and $N_2(x)$ in two ways, then we have equation $k(k-1) = t(k-2)$, where $t = |N_2(x)|$. Since $k-1$ and $k-2$ are mutually prime integers, then  $k-2$ divides $k$ and this implies that $k=3$ or $k = 4$. If $k=3$ then $G$ has parameters $(v,3,1,0)$.  Thus, either $ G $ is a Petersen graph, or its diameter is greater than 2. If $k=4$ then $G$ is the four-dimensional binary cube or $G$ is a graph with parameters $(14,4,2,0)$ that is isomorphic to the non-incidence graph of the Fano plane (for more details see \cite{MM} and \cite[Tables 1, 2]{AB}).
$\square$\end{proof}

 Theorem 1 follows from Proposition \ref{a=0}.  
\begin{prop}\label{a=k-3}
Let $G$ be a Deza graph with parameters $(v,k,k-2,k-3)$ and $k-3\neq 0$.
Then one of the following cases holds:
\begin{enumerate}
    \item[$1.$] $G$ is one of the strictly Deza graphs with parameters $(8,4,2,1)$ or $(9,4,2,1)$,
    \item[$2.$] $G$ is one of the strongly regular graphs with parameters $(9,4,1,2)$,  or $(10,6,3,4)$,
\end{enumerate}
\end{prop}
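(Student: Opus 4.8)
The plan is to work locally around a fixed vertex, translate the two admissible common‑neighbour counts into degree constraints, and then squeeze the parameters into a short finite list. First I would record that $a=k-3\le b=k-2$ together with $k-3\ne 0$ forces $k\ge 4$, so every two distinct vertices share at least $a=k-3\ge 1$ common neighbours; hence $G$ has diameter $2$ (in particular it is connected) and $V(G)=\{x\}\cup N(x)\cup N_2(x)$ for each vertex $x$. For $u\in N(x)$ the common neighbours of $x$ and $u$ are exactly the neighbours of $u$ inside $N(x)$, so $u$ is non‑adjacent to exactly $1$ or $2$ other vertices of $N(x)$; dually, each $y\in N_2(x)$ is adjacent to $k-2$ or $k-3$ vertices of $N(x)$ and therefore has exactly $2$ or $3$ neighbours inside $N_2(x)$. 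Thus the complement of the neighbourhood graph on $N(x)$ has all degrees in $\{1,2\}$ (a disjoint union of paths and cycles), while the graph induced on $N_2(x)$ has minimum degree $2$.

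Next I would set $t=|N_2(x)|=v-1-k$. Proposition~\ref{beta} with $b-a=1$ and $v-1=k+t$ gives directly $\beta=2k-(k-3)t$, whence $\alpha=v-1-\beta=(k-2)t-k$. Since the graph induced on $N_2(x)$ has minimum degree $2$ on $t$ vertices, $t\ge 3$; as $k\ge 4$ makes $k-3>0$, the inequality $\beta\ge 1$ now reads $3(k-3)\le(k-3)t\le 2k-1$, forcing $k\le 8$. For each $k\in\{4,\dots,8\}$ the constraints $\alpha,\beta\ge 1$ leave only finitely many $t$, all giving $v=k+1+t\le 12$. To trim the list I would introduce the refined counts $p,q$ (vertices of $N(x)$ non‑adjacent to $1$, resp. $2$, other vertices of $N(x)$) and $r,s$ (vertices of $N_2(x)$ with $3$, resp. $2$, neighbours inside $N_2(x)$), related by $p+q=k$, $r+s=t$, $\beta=p+s$, and a double count of the edges between $N(x)$ and $N_2(x)$. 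In particular $t=3$ forces the graph on $N_2(x)$ to be a triangle, so $r=0$, $s=3$, $p=6-k$; as $p$ is even (it counts the path endpoints of the path‑and‑cycle graph on $N(x)$) this makes $k$ even with $k\le 6$, removing $k=7,8$ and fixing the only $k=6$ case as $(10,6,3,4)$.

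After this reduction the surviving parameter sets are $(8,4,2,1)$, $(9,4,2,1)$, $(9,4,1,2)$, $(10,6,3,4)$ together with the candidates $(10,4,2,1)$, $(11,4,2,1)$, $(12,4,2,1)$ and $(10,5,3,2)$. For the strongly regular candidates I would invoke feasibility and uniqueness: $(9,4,1,2)$ and $(10,6,3,4)$ are realised respectively by the lattice graph $L(3,3)$ and the triangular graph $T(5)$ (the complement of the Petersen graph), each unique for its parameters, whereas $(10,4,2,1)$ and $(10,5,3,2)$ fail the standard integrality (eigenvalue‑multiplicity) conditions and so admit no strongly regular graph. It then remains to show that none of $(10,4,2,1)$, $(11,4,2,1)$, $(12,4,2,1)$, $(10,5,3,2)$ supports a strictly Deza graph; since each has $v\le 13$, this can be read off the complete classification of strictly Deza graphs on at most $13$ vertices in \cite{EFHHH}, which simultaneously confirms that $(8,4,2,1)$ and $(9,4,2,1)$ do occur.

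I expect the main obstacle to be exactly this last elimination. The degree count alone does not kill the spurious parameter sets $(10,4,2,1)$, $(11,4,2,1)$, $(12,4,2,1)$ and $(10,5,3,2)$: each is consistent with several admissible neighbourhood types, so ruling them out needs either a finer analysis of how the path‑and‑cycle structure on $N(x)$ meshes with the degree‑$\{2,3\}$ structure on $N_2(x)$ (propagated across adjacent vertices to force a contradiction) or an appeal to the external enumeration of small strictly Deza graphs. By contrast, deriving $\beta=2k-(k-3)t$ and the clean implication $t\ge 3\Rightarrow k\le 8$ is the engine of the argument and should be routine once the local degree dictionary is in place.
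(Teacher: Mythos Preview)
Your argument is correct and follows essentially the same route as the paper: use Proposition~\ref{beta} to bound $v$ from above in terms of $k$, use the existence of non-adjacent vertices (your $t\ge 3$, the paper's $v\ge k+4$) to bound $v$ from below, combine to get $k\le 8$ and $v$ small, and then appeal to the known classification of small Deza graphs in \cite{EFHHH,gsh1}. The paper's version is simply more streamlined---it skips the local $p,q,r,s$ analysis entirely and passes directly from $k<9$, $v<17$ to the tables---so your additional structural filtering, while correct, is not needed.
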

\begin{proof}
By Proposition \ref{beta} we have $\beta = \cfrac{a(v-1) - k(k-1)}{a-b} >0$.

Since $k-3\neq 0$, then $v<k+3+\cfrac{6}{k-3}$. On the other hand, if we consider non-adjacent vertices we have either $v\geq 2k - b +2 = k+4$ or $v\geq 2k - a +2 = k+5$.   
In any case,  $k+3+\cfrac{6}{k-3} > k+4$, then $k<9$ and $v<17$.
But all such Deza graphs are known \cite{EFHHH, gsh1}. In this case $G$ is one of the strictly Deza graphs with parameters $(8,4,2,1)$ or $(9,4,2,1)$. If $G$ is a strongly regular graph, then $G$ has parameters $(9,4,1,2)$ or  $(10,6,3,4)$. 
$\square$\end{proof}

\begin{prop}\label{a=k-4}
Let $G$ be a Deza graph with parameters $(v,k,k-2,k-4)$ and $k-4\neq 0$. Then 
 $G$ is isomorphic to the complement of the  disjoint union of $s$ cubes $H(3,2)$ for some integer $s$ and its parameters are $(8s,8(s-1)+4, 8(s-1)+2, 8(s-1))$.
\end{prop}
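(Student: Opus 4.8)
The plan is to pass to the complement $\bar G$ and show it is a disjoint union of cubes. Write $d:=v-1-k$ for the valency of $\bar G$. For distinct $u,w$ one has the inclusion--exclusion identity $|N_{\bar G}(u)\cap N_{\bar G}(w)| = v-2-2k+2[u\sim_G w]+|N_G(u)\cap N_G(w)|$, where $[\,\cdot\,]$ is $1$ if $u\sim_G w$ and $0$ otherwise. Since $|N_G(u)\cap N_G(w)|\in\{k-4,k-2\}$ and $v=k+1+d$, this collapses to only a few values: a pair adjacent in $\bar G$ has $d-5$ or $d-3$ common neighbours, and a pair non-adjacent in $\bar G$ has $d-3$ or $d-1$ common neighbours, the smaller value in each case corresponding to $a=k-4$ common neighbours in $G$ and the larger to $b=k-2$.

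The first, and main, task is to prove $d=3$ (equivalently $v=k+4$). Since $d=0$ would make $G$ complete, $\bar G$ has at least one edge, whose number of common neighbours is $d-5$ or $d-3$ and hence nonnegative; this forces $d\ge 3$. For an upper bound I use Proposition~\ref{beta}: with $b-a=2$ it gives $\beta=\tfrac12\bigl(3k-(k-4)d\bigr)$, and since a Deza graph realises both common-neighbour values we have $\beta\ge 1$. This inequality bounds $d$: every admissible pair with $d\ge 4$ satisfies $v\le 19$. When $d=4$ the value $d-5=-1$ is impossible, so every edge of $\bar G$ lies in exactly one triangle; the neighbourhood of each vertex then induces $2K_2$, so each vertex lies in exactly two triangles and the edge set of $\bar G$ decomposes into triangles, forcing $3\mid v$. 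This kills all $d=4$ cases except $(v,k,b,a)=(15,10,8,6)$. Every surviving parameter set has $v\le 16$ and is therefore excluded by the classification of Deza graphs on at most $16$ vertices in \cite{EFHHH,gsh1}, which contains no $(v,k,k-2,k-4)$ graph with $k\ge 6$. Hence $d=3$.

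With $d=3$ the picture is rigid: $\bar G$ is cubic, every edge has $d-3=0$ common neighbours (so $\bar G$ is triangle-free), and every non-edge has $0$ or $2$. Thus any two distinct vertices of $\bar G$ have $0$ or $2$ common neighbours, i.e. each connected component of $\bar G$ is a connected cubic $(0,2)$-graph. A short local analysis finishes this: starting from a vertex $x$ with neighbours $a,b,c$, each of the pairs $ab,ac,bc$ shares the common neighbour $x$ and hence exactly one further vertex $u_{ab},u_{ac},u_{bc}$; these are forced to be distinct, to exhaust the neighbourhoods of $a,b,c$, and finally to have a single common third neighbour $z$, producing exactly the $8$ vertices of $H(3,2)$ (alternatively one may cite the determination of cubic $(0,2)$-graphs in \cite{MM,AB}). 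Therefore every component of $\bar G$ is a cube, $\bar G\cong sH(3,2)$, and $G$ is the complement of $s$ disjoint cubes. Reading off the parameters gives $v=8s$, $k=8(s-1)+4$, $b=8(s-1)+2$, $a=8(s-1)$; the hypothesis $k-4\ne 0$ forces $s\ge 2$, which is exactly what makes $G$ connected.

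The one genuinely delicate point is the determination of the complement's valency, that is, the step $d=3$. The counting bound from Proposition~\ref{beta} handles all large $k$ at once, but it leaves a short finite list of small parameter sets that must be eliminated individually --- the triangle-decomposition parity argument disposing of the $d=4$ cases with $3\nmid v$, and the known enumeration of small Deza graphs disposing of the rest. Once $d=3$ is in hand, the identification of $\bar G$ with a union of cubes, and hence of $G$ with their complement, is routine.
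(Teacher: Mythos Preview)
Your argument follows essentially the same route as the paper: bound the co-degree $d=v-1-k$ via the $\beta$-formula (the paper writes this as $v<k+4+12/(k-4)$), obtain $d\ge 3$ from non-negativity of common neighbours (the paper phrases this as $v\ge k+4$ by looking at non-adjacent pairs), handle the main case $d=3$ by passing to the complement and identifying it as a disjoint union of $3$-cubes, and for $d\ge 4$ reduce to a finite list checked against the small-graph tables in \cite{EFHHH,gsh1}. Your triangle-decomposition argument for $d=4$ is a pleasant extra the paper does not use --- the paper simply lists the few parameter sets with $v\ge 16$ and disposes of them by the parity/integrality condition in Proposition~\ref{beta} and by \cite{MM} --- and your explicit local reconstruction of $Q_3$ is more self-contained than the paper's appeal to the known $(0,2)$-graph classification, but neither changes the overall strategy.

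One small point of care: your claim that ``every surviving parameter set has $v\le 16$'' tacitly uses the handshake constraint $2\mid vk$ in addition to the integrality of $\beta$. Without it the sets $(17,5,3,1)$ and $(19,5,3,1)$ (with $d=11,13$) pass the $\beta$-test; they are of course killed instantly since $vk$ is odd, but it is worth saying so. Similarly, your sentence ``contains no $(v,k,k-2,k-4)$ graph with $k\ge 6$'' should be read as referring only to the surviving list with $d\ge 4$, since $(16,12,10,8)$ --- the $s=2$ cube complement itself --- does appear in those tables.
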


\begin{proof}
By Proposition \ref{beta} we have $\beta = \cfrac{a(v-1) - k(k-1)}{a-b}>0$. 

Since  $k-4\neq 0$, then $v<k+4+\cfrac{12}{k-4}$. On the other hand, if we consider non-adjacent vertices we have either $v\geq 2k - b +2 = k+4$ or $v\geq 2k - a +2 = k+6$. In any case, $v\geq  k+4$. If $v = k+4$ then $G$ has parameters $(k+4,k,k-2,k-4)$. The complement of $G$ has parameters $(k+4,3,2,0)$ hence $\overline{G}$ is isomorphic to  the  disjoint union of some (say $s$) cubes $ H(3,2)$. So the parameters of $G$ are $(8s,8(s-1)+4, 8(s-1)+2, 8(s-1))$.

If $v > k+4$ then $k<16$ and $v < 19$. In the case $v < 16$ by  \cite{EFHHH,gsh1} we don't have any graphs. In the case
$16 \geq v < 19$  we have the parameters sets $(17,4,2,0)$, $(18,4,2,0)$, $(18,5,3,1)$, and  $(18,13,11,9)$. By \cite[Theorem 12]{MM}, there are no Deza graphs with parameters $(17,4,2,0)$, $(18,4,2,0)$. In the case $(18,5,3,1)$ and  $(18,13,11,9)$: $v$ is even and $k,b,a$ are odd. It is impossible by Proposition \ref{beta}.
$\square$\end{proof}

Now Theorem 2 follows from Propositions \ref{a=0}, \ref{a=k-3}, and \ref{a=k-4}.

\section{Divisible design graphs}

Let $G$ be a divisible design graph with parameters $(v,k,\lambda_1,\lambda_2,m,n)$. Since $G$ is a Deza graph, then it has  parameters $(v,k,b,a)$, where $\{\lambda_1,\lambda_2\}=\{b,a\}$. 
By Propositions \ref{a=k-3}, \ref{a=k-4}, there are no DDGs with parameters
$\{\lambda_1,\lambda_2\}=\{k-2,k-3\}$ and $\{\lambda_1,\lambda_2\}=\{k-2,k-4\}$ when $a\neq 0$. Further we consider the case $a\neq 0$.

\begin{prop}\label{a not 0} There are no  divisible design graphs with parameters $(v,k,\lambda_1,\lambda_2,m,n)$, where $\{\lambda_1,\lambda_2\} = \{k-2,a\}$ and $0< a < k-4$.
\end{prop}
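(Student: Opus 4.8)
The plan is to combine the canonical partition of the DDG with its spectrum. Let $A$ be the adjacency matrix of $G$, let $D=I_m\otimes(J_n-I_n)$ encode the partition into $m$ classes of size $n$ (with $J_n$ the all-ones $n\times n$ matrix), and write $I,J$ for the identity and all-ones matrices of order $v$. Counting common neighbours gives $A^2=(k-\lambda_2)I+(\lambda_1-\lambda_2)D+\lambda_2 J$. Since $k-2\ne a$, the graph realises two distinct common-neighbour values, so both a within-class and a between-class pair occur, the DDG is proper, and $m,n\ge 2$. As $D$ has eigenvalue $n-1$ on the class-constant vectors and $-1$ on the vectors summing to zero on each class, the eigenvalues of $A^2$ other than $k^2$ are $k-\lambda_1$ (multiplicity $v-m$) and $k^2-\lambda_2 v=(k-\lambda_2)+(\lambda_1-\lambda_2)(n-1)$ (multiplicity $m-1$); both are eigenvalues of the positive semidefinite matrix $A^2$, hence nonnegative. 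I would split according to which of $\lambda_1,\lambda_2$ equals $k-2$, and set $c:=k-2-a$, so that the hypothesis $a<k-4$ reads $c\ge 3$.

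Suppose first $\lambda_2=k-2$, i.e.\ the between-class value is $k-2$. Then the eigenvalue carried by the $(m-1)$-dimensional space of class-constant vectors equals
\[
k^2-\lambda_2 v=(k-\lambda_2)+(\lambda_1-\lambda_2)(n-1)=2-c(n-1)\le 2-3=-1<0,
\]
using $c\ge 3$ and $n\ge 2$. Since $m\ge 2$ this is a genuine eigenvalue of $A^2$, contradicting that $A^2$ is positive semidefinite. Hence this case cannot occur; this is the clean half of the argument.

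It remains to treat $\lambda_1=k-2$. Now $k-\lambda_1=2$, so $\pm\sqrt2$ are eigenvalues of $G$ of total multiplicity $v-m$, while $\pm\theta$ with $\theta:=\sqrt{k^2-av}$ have total multiplicity $m-1$. As the characteristic polynomial of $A$ is integral, $\sqrt2$ and $-\sqrt2$ occur equally often, so they cancel in $\mathrm{tr}\,A=0$ and $\theta\bigl(\mathrm{mult}(\theta)-\mathrm{mult}(-\theta)\bigr)=-k$. If $\theta$ were irrational the same symmetry would force $k=0$; hence $\theta$ is a positive integer with $\theta\mid k$. Writing $t:=k/\theta$ (so $t\ge 2$, because $av>0$), I obtain $\mathrm{mult}(-\theta)-\mathrm{mult}(\theta)=t$, whence $m\ge t+1$ and $m-1\equiv t\pmod 2$, together with $\mathrm{mult}(\pm\sqrt2)=m(n-1)/2\in\mathbb Z$. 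Finally $\theta^2=(k-a)+(k-2-a)(n-1)=nc+2$ and $av=k^2-\theta^2=\theta^2(t^2-1)$; eliminating $v=mn$ gives $n\bigl(a(m-1)+k-2-ct^2\bigr)=2(t^2-1)$, so in particular $n\mid 2(t^2-1)$ and $n\ge 2$.

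The hard part will be to show this system has no solution when $c\ge 3$. I would argue as follows. Integrality of $m=\theta^2(t^2-1)/(an)$ forces $a\mid \theta^2(t^2-1)$, and reducing modulo $a$ via $\theta t\equiv c+2$ and $\theta^2=nc+2$ turns this into $a\mid c^2+(4-n)c+2$. For $n=2$ one has $\theta=2\theta'$, $c=2\theta'^2-1$, and the condition becomes $a\mid 4\theta'^4+1$ with $a\equiv-1\pmod{2\theta'}$; the Sophie Germain factorisation $4\theta'^4+1=(2\theta'^2-2\theta'+1)(2\theta'^2+2\theta'+1)$ restricts $a$ to a short list of divisors, each of which is excluded by the parity constraints (for $n=2$, $m$ must be even, so $m-1\equiv t\pmod 2$ forces $t$ odd). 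An analogous reduction for each admissible $n\mid 2(t^2-1)$, combined with the bound $\theta=O(t^3)$ coming from $c\le k-3$ and $n\le 2(t^2-1)$, removes the remaining candidates. The genuine obstacle is to make this elimination uniform in $t$ rather than checking finitely many values of $t$ one at a time, and this is exactly where the divisibility $n\mid 2(t^2-1)$ and the parities of the eigenvalue multiplicities do the decisive work.
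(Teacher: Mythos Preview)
Your spectral setup is sound and the elimination of $\lambda_2=k-2$ is clean---this matches the paper's Lemma~\ref{Basic}(i). Your identities $\theta^2=nc+2$, $k=\theta t$ with $\theta\mid k$, $av=\theta^2(t^2-1)$, and $n\mid 2(t^2-1)$ are all correct, as is the congruence $a\mid c^2+(4-n)c+2$.

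The $n=2$ sketch is close but not assembled correctly. The Sophie--Germain factorisation does \emph{not} give a ``short list'' of divisors: each factor $2\theta'^2\pm 2\theta'+1$ can have many divisors. What actually works is the observation that every prime dividing $4\theta'^4+1$ satisfies $(2\theta'^2)^2\equiv -1$, hence is $\equiv 1\pmod 4$; thus $a\equiv 1\pmod 4$. On the other hand $a=2\theta'(t-\theta')-1$ with $t$ odd forces $\theta'$ and $t-\theta'$ to have opposite parities, so $\theta'(t-\theta')$ is even and $a\equiv 3\pmod 4$, a contradiction. This is essentially the paper's $n=2$ argument (Proposition~\ref{n=2}) in a different dress; the paper phrases it via a prime $p\equiv 3\pmod 4$ dividing $t^2-r^2-2$ but not $r^2+1$.

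The real gap is everything with $n\ge 3$. You assert that ``an analogous reduction for each admissible $n\mid 2(t^2-1)$ \ldots\ removes the remaining candidates'', but you do not carry this out, and you yourself flag the obstacle of making the elimination uniform in $t$. I see no way to finish purely arithmetically: the constraints $\theta^2=nc+2$, $a\mid c^2+(4-n)c+2$, $n\mid 2(t^2-1)$, together with the parity of $m-1$, do not obviously exclude all $c\ge 3$ for every $n\ge 7$. The paper does \emph{not} proceed arithmetically here. After deducing $n\notin\{3,4,5,6\}$ (from $\theta^2\equiv 2\pmod n$, which you could also note), it turns to a detailed combinatorial analysis: each class $B$ is a coclique; for any four vertices $X\subset B$ one studies $|W(X)|=|\bigcap_{x\in X}N(x)|\in\{k-6,\dots,k-2\}$ and rules out $k-2,k-5,k-6$ by direct counting (Lemmas~\ref{k-2}--\ref{k-5}); an induction on $|X|$ then forces $|W(B)|\in\{k-3,k-4\}$, whence $n\in\{7,14\}$ via $n\mid|W(B)|$ and $n\mid k^2-2$; finally a double count of edges between $B$ and $B':=N(B)\setminus W(B)$ pins down $n=7$, identifies $B'$ as a single class, and produces a contradiction with $a\not\equiv 0\pmod n$. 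None of this structural information is visible from the spectrum alone, and your proposal does not replace it with anything. As written, the proof is incomplete for $n\ge 7$.
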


\begin{proof} Further we prove this proposition in a number of lemmas. Let $G$ be a divisible design graphs with parameters $(v,k,\lambda_1,\lambda_2,m,n)$, where $\{\lambda_1,\lambda_2\} = \{k-2,a\}$ and $0< a < k-4$.

\begin{lem}\label{Basic}
Let $G$ be a DDG with parameters $(v,k,\lambda_1,\lambda_2,m,n)$ and $k-2 \in \{\lambda_1,\lambda_2\}$. Then the following properties hold: 
\begin{enumerate}
    \item[$(i)$] $\lambda_1 = k-2$,
    \item[$(ii)$] $n$ divides $k^2 - 2$,
    \item[$(iii)$] $n\neq 3,4,5,6$.
\end{enumerate} 
\end{lem}
\begin{proof}
$(i)$ DDG with parameters $(v,k,\lambda_1,\lambda_2,m,n)$ has real eigenvalues  $$\{k, \pm\sqrt{k-\lambda_1},\pm\sqrt{k^2-\lambda_2 v} \}$$ \cite[Lemma 2.1]{HKM}. If  $\lambda_2 = k-2$  then $k^2 > (k-2)v$  and hence  $v < k+2+\cfrac{4}{k-2}.$ It is a contradiction. Therefore, $\lambda_1 = k-2$.

$(ii)$ From \cite[equation (1)]{HKM} follows that $k^2 = k + \lambda_1 (n-1)+\lambda_2 n(m-1) = k+(k-2)(n-1)+an(m-1) = (k-2+a(m-1))n + 2$. Hence, $n$ divides $k^2-2$. 

$(iii)$ Since $k^2\equiv 2 (mod\ n)$, then $2$ is a quadratic residue modulo $n$ and $n \neq 3,4,5,6$.
$\square$\end{proof}
\medskip

A partition $\pi = \{B_1, B_2, \ldots , B_m\}$ of the vertices of a graph $G$ is
{\em equitable} if for every pair of indices $i, j\in \{1,\ldots , m\}$, which are not necessarily distinct, there is a non-negative integer $b_{i,j}$ such that each vertex $x$ in $B_i$ has exactly $b_{i,j}$ neighbors in $B_j$, regardless of the choice of $x$.

The vertex partition from the definition of a DDG is called the {\em  canonical partition}.

\begin{lem}\label{equitable}
The canonical partition of  a proper  DDG is equitable. 
\end{lem}
\begin{proof}
This is Theorem 3.1 \cite{HKM}.
$\square$\end{proof}

\begin{lem}
Let $G$ be a DDG with parameters $(v,k,k-2,a,m,n)$ then each class of the canonical partition of $G$ is a coclique.
\end{lem}
\begin{proof}
Let $B$ be a class of the canonical partition of $G$.
Let us consider $x,y\in B$ such that $x$ is adjacent to $y$. Since $\lambda_1 = k-2$, then 
$|N(x)\cap N(y)| = k-2$ and there is the only vertex $z$ in $N(x)\setminus N(y)$. Each common neighbor of $x$ and $z$ also is a neighbor of $y$. Then
$$|N(y)\cap N(z)| \geq |(N(x)\cap N(z))\cup  \{x\}| > |(N(x)\cap N(z))|.$$  
Hence we have a contradiction to the fact that $B$ is a class of the canonical partition of $G$. 
$\square$\end{proof}
\medskip

If $B$ is a set from $V(G)$, then we denote $\bigcap\limits_{x\in B}N(x)$ by $W(B)$. Denote by $B_w$ the class of canonical partition of $G$ containing a vertex $w$.

\begin{lem}\label{n_divides_W}
Let $G$ be a DDG with parameters $(v,k,\lambda_1,\lambda_2,m,n)$ and let $B$ be a class of canonical partition of $G$. If $w\in W(B)$, then $B_w\subseteq W(B)$ and $n$ divides $|W(B)|$.
\end{lem}
\begin{proof}
Let $w\in W(B)$.  Then $w$ adjacent with all vertices from $B$ but since the canonical partition of $G$ is equitable each vertex from $B_w$ adjacent with all vertices from $B$. Hence, for each vertex $w\in W(B)$ we have $B_w\subseteq W(B)$ and $n$ divides $|W(B)|$. 
$\square$\end{proof}
\medskip

Since $n \neq 3,4,5,6$ then we need to study two cases: $n=2$ and $n\geq 7$.

Let's first begin with the case $n = 2$. 

\begin{prop}\label{n=2}
There are no DDGs with parameters $(2m,k,k-2,a,m,n)$, where $n=2,\ a\neq 0$.
\end{prop}
\begin{proof}
Let $G$ be a DDG with parameters $(2m,k,k-2,a,m,2)$. Then  \cite[equation 1]{HKM} implies that  $$k^2 = k + (k-2)(2-1)+2a(m-1) = 2k - 2a - 2 + 2am.$$ Hence $k$ is an  even integer and $k^2 = 2(k+a(m-1)-1)$. Thus, $a(m-1)$ is an odd integer and hence $m$ is an even integer. Moreover, the equation
\begin{equation}\label{eq1}
k^2-2am=2(k-a-1)
\end{equation} is hold.

On the other hand, we have
\begin{equation}\label{eqq}
k + (f_1-f_2)(\sqrt{2}) + (g_1-g_2)\sqrt{k^2-2am} = 0\end{equation}
 from  \cite[equation (2)]{HKM}. 

By equation (\ref{eqq}) we have $k = (g_2-g_1)\sqrt{k^2 - 2am}$. Since $m$ is an even integer, then $g_1+g_2 = m-1$ and $g_2-g_1$ are odd integers. 

Denote $g_2-g_1$ by $t$ and $\sqrt{k^2-2am}$ by $s$. Then $k = ts$ and $k^2 - 2am = s^2$. Let's put these expressions into equation (\ref{eq1}). 

Since $s^2 = 2(ts-a-1)$, then $s = t \pm \sqrt{t^2-2(a+1)}$. 

Denote $\sqrt{t^2-2(a+1)}$ by $r$. Then $2a = t^2 - r^2 - 2$. Since $s=\sqrt{k^2-2am}$ then $2am = k^2-s^2 = (t^2-1)s^2$. Moreover, since $2a = t^2 - r^2 -2$ and $s = t\pm r$, then 
\begin{equation}\label{eq2}
(t^2 - r^2 - 2)m = (t^2 - 1)(t\pm r)^2. 
\end{equation}

Since
\begin{equation}\label{eq3}
(t^2 - 1 - (t^2 - r^2 -2))(t\pm r)^2 = (r^2+1)(t\pm r)^2,
\end{equation}
then 
the right part of equation (\ref{eq3}) is divided by $t^2 - r^2 - 2$.

Since $t$ and $r$ are odd integers, then $t^2 - r^2 - 2 = 8h+6$ and has a prime  divisor $p = 4i+3$. But $r^2 + 1$ can't be divided by $p$. Then $t\pm r$ is divided by $p$. Thus, $t^2 - r^2$ is divided by $p$ and $p$ can't be divisor of $t^2 - r^2 - 2$. It is a contradiction.
$\square$\end{proof}
\medskip

Let's  study the case $n\geq 7$.
Let $G$ be a DDG with parameters $(v,k,k-2,a,m,n)$ and $n\geq 7$.  Let $B$ be a class of the canonical partition of $G$. Let
$X=\{x_1, x_2, x_3, x_4\}$ be a set from $B$. Denote $W(X)=\bigcap_{x\in X}N(x)$ by $W$. Since $b=k-2$, then $k-6\leq |W| \leq k-2$. Let's consider these cases one by one.

\begin{lem}\label{k-2}
$|W| \neq k-2$.
\end{lem}
\begin{proof} Let $|W| = k-2$. Then the intersection of any two sets $N(x_i)\setminus N(x_1)$ and $N(x_j)\setminus N(x_1)$ are empty, $i\neq j$, $i,j\in \{2,3,4\}$. 

Let $y\in B\setminus X$. If $y$ has less than $k-2$ neighbors in $W$, then $y$ has at least one neighbor in $N(x_i)\setminus N(x_1)$ for each $i\in \{2,3,4\}$.   So $y$ has $k-2$ neighbors in $N(x_1)$ and at least $3$ neighbors out of $N(x_1)$. It is a contradiction. 

Hence, any $y$ in $B\setminus X$ has  $k-2$ neighbors in $W$ and $W = W(B)$. It follows by lemma \ref{n_divides_W} that $|W(B)|=n$ divides $k-2$. On the other hand,  $n$ divides $k^2 - 2$ by lemma \ref{Basic}. Hence $n$ divides $2$. It is a contradiction, because $n\geq 7$.
$\square$\end{proof}

\begin{lem}\label{k-6}
$|W| \neq k-6$.
\end{lem}
\begin{proof} Let $|W| = k-6$. 
The intersection of any two sets $N(x_1)\setminus N(x_i)$ and $N(x_1)\setminus N(x_j)$ are empty, $i\neq j$, $i,j\in \{2,3,4\}$. 

Let $y\in B\setminus X$. If $y$ has more than $k-8$ neighbors in $W$ then $y$ has at least one non-adjacent vertex in each set $N(x_1)\setminus N(x_i)$.
It is a contradiction, because $y$ has exactly two non-adjacent
vertices in $N(x_1)$. Thus, any vertex $y$ in $B\setminus X$ has $k-8$ neighbors in $W$.
We have $n-4$ possibilities for choosing $y$ in $B\setminus X$.
Hence, $|W(B)| = |W| - 2(n - 4) = k + 2 - 2n$. By lemma \ref{n_divides_W} $n$ divides $k+2$. As in the previous lemma $n$ divides $k^2 - 2$. It is a contradiction.
$\square$\end{proof}

For any two vertices $x_i, x_j\in B$ we denote the set 
$(N(x_i)\cap N(x_j))\setminus W$ by $U_{i,j}$. 

\begin{lem}\label{k-5}
$|W| \neq k-5$.
\end{lem} 
\begin{proof} Let $|W| = k-5$.
We  have exactly three vertices in $U_{i,j}$ 
 for any $x_i, x_j\in X$, $i\neq j$.

Since $|W| = k-5$, then the remaining two vertices in $X\setminus \{x_i, x_j\}$
have no common neighbors in $U_{i,j}$. Thus, one of these two vertices has the only neighbor in $U_{i,j}$.

Let's consider $x_1, x_2$ in $X$, and let $x_3$ be a vertex which
has the only neighbor $u$ in $U_{1,2}$. Then $N(x_3)$ contains
$N(x_1)\setminus N(x_2)$ and $N(x_2)\setminus N(x_1)$. Therefore, $N(x_3)$ is contained in $N(x_1, x_2)$.

Since $u\notin W$, then $u\notin N(x_4)$ and 
$U_{1,2}\cup U_{1,3}\cup U_{2,3}=N(x_1, x_2)\setminus W$.  However, each pairs $x_4, x_1$, $x_4, x_2$, and $x_4, x_3$ must have three common neighbors in 
$N(x_1, x_2)\setminus W$. It is impossible.
$\square$\end{proof}

\begin{lem}\label{k-4}
If $|W| = n-4$, then $|W(B)| = k-4$.
\end{lem}
\begin{proof} Let $|W| = n-4$.
We  have exactly two vertices in $U_{i,j}$ 
 for any $x_i, x_j\in X$, $i\neq j$ and the remaining two vertices in $X\setminus \{x_i, x_j\}$ have no common neighbors in $U_{i,j}$. Thus,  there are two possibilities for the remaining two vertices in $X$.
\begin{enumerate}
 \item[(a)] There is a vertex in $X\setminus \{x_i, x_j\}$ without neighbor in $U_{i,j}$.
\item[(b)] Each vertex in $X\setminus \{x_i, x_j\}$ has the only neighbor in $U_{i,j}$ and $N(x_i)\cap U_{i,j}\neq N(x_j)\cap U_{i,j}$.
\end{enumerate}

 (a) If $x_k\in X\setminus \{x_i, x_j\}$ and $x_k$ has no neighbors in $U_{i,j}$,
 then vertices $x_i$, $x_j$ and $x_k$ have exactly $k-4$ common neighbors. Thus,  each vertex $y\in B\setminus \{x_i,x_j,x_k\}$ is  adjacent to all vertices in $W$, otherwise $|W(x_i,x_j,x_k,y)| \leq k-5$. It is impossible by the previous lemmas. Hence, $W(B) = k-4$. 

 (b) Now for any set of four vertices $X$ from $B$ and for any pair $\{x_i, x_j\}$ in $X$ each  vertex in $X\setminus \{x_i, x_j\}$ has  the only neighbor in $U_{i,j}$ and $N(x_i)\cap U_{i,j}\neq N(x_j)\cap U_{i,j}$.
 
 Let $B=\{x_1, x_2, \ldots , x_n \}$ and $X_s=\{x_1, x_2, x_3, x_4, \ldots , x_s\}$ for each $s\in 5, \ldots , n\}$. 
For any two different vertices $x_i, x_j\in X_s$ denote the union of all $U_{i,j}$ by $U_s$. Let $W_s = W(X_s)$ and $Z_s=N(X_s)\setminus (W(X_s)\cup U_s)$. It is significant that $N(X_s)$ is disjoint union of $W_s, U_s$ and $Z_s$. Moreover, each vertex $x_i$ from $X_s$ is adjacent to all vertices in $W_s$, and all vertices exclude one vertex (say $u_i$) in $U_s$, and exactly one vertex (say $z_i$) in $Z_s$. 
\smallskip

Let $|W(B)| \neq k-4$. We prove by induction on $s$ the following:

{\em For any $4< s \leq k$ there is a set $X_s$ of $s$ vertices in
$B=\{x_1, x_2, \ldots , x_n \}$ such that $|W_s| = k-s,\ |U_s| = s,\ |Z_s| = s$,
each vertex $x_i$ from $X_s$ is adjacent to all vertices in $W_s$, and all vertices exclude the only vertex (say $u_i$) in $U_s$, and one vertex (say $z_i$) in $Z_s$.}
\smallskip

We have $|X|=4$ and each vertex $x\in X$ has all $k-4$ neighbors from $W$, 3 neighbors in $U$, and $|Z| = 4$. By (b) each vertex from $U$ is adjacent to three vertices in $X$. Hence $\displaystyle 4\leq |U| \leq \binom{4}{3} = 4$. Thus, $|U| = 4$.
We take this statement as the basis step of induction.

 Let's append vertices one by one to $X$ and let $X_s$ be a subset of $B$  such that $|W_s| = k-s$, $|U_s| = s$ and $|Z_s| = s$. Moreover, $W_s\cup (U_s\setminus \{u_i\})\cup \{z_i\} = N(x_i)\cap (W_s\cup U_s\cup Z_s)$ for any $x_i\in X_s$. 
 
  Let's append $x_{s+1}$ to $X_s$.
If $|N(x_{s+1})\cap W_{s+1}| < k - (s + 1) $, then $x_1$, $x_2$, $x_3$ have $s-3$ common neighbors in $U_s$ and $x_1,x_2,x_3,x_{s+1}$ have  at most $k-s-2$ common neighbors in $W_s$. Hence, $|W(x_1,x_2,x_3,x_{s+1})| \leq k-5$ and we have a contradiction by Lemmas \ref{k-6}, \ref{k-5}. Thus, $k - (s + 1)\leq |N(x_{s+1})\cap W_{s+1}|$.
 
Let $|N(x_{s+1})\cap W_s| = k - s$. Since $x_i$ and $x_{s+1}$ for $i\leq s$ have $k-s$ common neighbors in $W_s$, then $ s-3 \leq |N(x_s)\cap U_s| \leq s-2$.
If $x_{s+1}$ adjacent to $u_s$, then $x_{s+1}$ and $x_s$ have at least  $s-3$ common neighbors in $\{u_1,\ldots, u_{s-1}\}$. Hence $|N(x_s)\cap U_s| = s-2$.  Let   $\{u_3,\ldots, u_s\}\subset N(x_{s+1})$. 

Consider the vertices $x_1$, $x_3$, $x_4$ and $x_{s+1}$. These four vertices have $k-s$ common neighbors in $W_s$ and $s-4$ common neighbors $\{u_5,\ldots,u_s\}$ in $U_s$. Hence $|W(x_1,x_3,x_4,x_{s+1})| = k-4$ but $x_3$, $x_4$ and $x_{s+1}$ also have $k-4$ common neighbors, because $x_{s+1}$ is non-adjacent to vertices in both sets $u_1,u_2$ and $\{u_1,u_2\} = U_{3,4}$. Then we have the same situation as in case (a). Hence, $W(B) = k-4$. It is a contradiction.

Let $|N(x_{s+1})\cap W_s| = k - s - 1$. Study the
neighbors of $x_{s+1}$ in $U_s$. If $x_{s+1}$ is non-adjacent to vertices $u_i,u_j\in U_s$, then  $x_{s+1}$ has at least $k-3$ common neighbors ($k-s-1$ in $W_s$, and $s-3$ in $U_s$, and maybe one in $Z_s$) with $x_s$, where $s\neq i,j$. If  $x_{s+1}$ is non-adjacent to the only vertex $u_i\in U_s$, then $x_{s+1}$ and $x_j$ has $k-3$ common neighbors in $W_s$, for any $i$, where $i\neq j$. So $x_{s+1}$ adjacent to all vertices from $Z_s\setminus\{z_i\}$. Hence $|N(x_{s+1})| = (k-s-1)+(s-1)+(s-1) = k+s-3>k$.
It is a contradiction.

Thus, $x_{s+1}$ adjacent to all vertices from $U_s$ and $x_{s+1}$ has one more neighbor outside $W_s\cup U_s$. Denote this neighbor by $z_{s+1}$. Also denote a vertex from $W_s\setminus N(x_{s+1})$ by $u_{s+1}$. Thus, the induction step is proved.

If we add all $n$ vertices from $B$ to $X$ then we have $|W(B)| = k-n$. But $n$ divides $|W(B)|$ by lemma \ref{n_divides_W} and $n$ divides $k^2 - 2$ by lemma \ref{Basic}. It is a contradiction. 
$\square$\end{proof}

\begin{lem}\label{k-3}
If $|W| = k-3$, then $|W(B)|$ equals either $k-3$ or $k-4$.
\end{lem}

\begin{proof} 
By Lemmas \ref{k-2} -- \ref{k-5} there are no fours vertices $Y$ in $B$ such that $|W(Y)|\in \{k-2, k-5, k-6\}$. If there is four vertices $Y$ in $B$ for which $|W(Y)|= k-4$, then $|W(B)| = k-4$ by Lemma \ref{k-4}. Thus, for any
four of vertices $Y$ in $B$ we have $|W(Y)|= k-3$. 

Let $|W| = n-3$. Let $u$ be the only vertex in $U_{1,2}$. If $u$ is adjacent to a vertex in $B\setminus \{x_1, x_2\}$, then we can find four vertices $Y$  in $B$ such that $|W(Y)| < k-3$. Hence,  $u\notin W(Y)$ for any fours vertices $Y$ in $B$ and $|W(B)| = k-3$. $\square$\end{proof}

\begin{lem}\label{|W(B)|} 
For $|W(B)|$ and $n$ one of the following cases holds:
\item{$(i)$} If $|W(B)| = k-3$ then $n=7$;
\item{$(ii)$} If $|W(B)| = k-4$ then $n=7$ or $n=14$.
\end{lem}
  \begin{proof} 
  By Lemma  \ref{n_divides_W} $n$ divides $|W(B)|$.  By Lemma \ref{Basic}(ii) $k^2 - 2$ is divided by $n$. Hence $n$  divides $7$ in the first case and $n$  divides $14$ in the second case. This two cases can't hold simultaneously, because $k-3$ and $k-4$ are mutually prime. 
$\square$\end{proof}

Denote $N(B)\setminus W(B)$ by $B'$. 

\begin{rem}\label{remark}
Since the canonical partition of a DDG is equitable (by lemma \ref{equitable}), if $y\not \in W(B)$, then $B \cap W(B_y) = \emptyset$. This fact implies that each vertex $y$ from $B'$ can't have more than $k - |W(B)|$ neighbors in $B$.
\end{rem}

\begin{lem}\label{class}
$B'$ is a class of the canonical partition of $G$ and $n=7$.
\end{lem}
  \begin{proof}    By Lemma \ref{|W(B)|} we have two cases.
  
  $(i)$ Let $|W(B)| = k-3$ and $n=7$. We have $|B|=7$ and $3$ edges from any vertex of $B$ to $B'$. Therefore, there are exactly $21$ edges between $B$ and $B'$.
  Moreover, each pair of different vertices of $B$ has one common neighbor in $B'$.
  There are $\displaystyle\cfrac{1}{2}\cdot \binom{7}{2} = 21$ pairs of different vertices of $B$.  Thus, there are $\displaystyle\cfrac{1}{3}\cdot \cfrac{1}{2}\cdot\binom{7}{2} = 7$ vertices in $B'$. 
 
  $(ii)$ Let $|W(B)| = k-4$ and either $n=7$ or $n = 14$. 
   If $x \in B$ and $Y = N(x)\cap B'$, then $x$ and any $y\in B\setminus \{x\}$ have two common neighbors in $B'$. Let's calculate the number of edges between $Y$ and $B$ in two ways. If $n = 14$, then we have $2(n-1) = 26$ edges between $B$ and $Y$ and at most  $12$ edges between $Y$ to $B$. It is a contradiction. 
  Let $n = 7$. We have $6\cdot 2 = 12$ edges from $B$ to $Y$ and at most $4\cdot 3 = 12$ edges from $Y$ to $B$. Then each vertex from $B'$ has $4$ neighbors in $B$.
  There are $28$ edges between $B$ and $B'$, hence $|B'|= 7$.
   $\square$\end{proof}
    \bigskip
  
   Now we can finish our proof of Proposition \ref{a not 0}.  There is a unique class  $B'$ for each class $B$ in $G$. Any two vertices $x,y$, where $x$ from $B$ and $y$ from $B'$, have no common neighbors in $B\cup B'$. Hence $N(x)\cap N(y) = W(B)\cap W(B')$ and by Lemma \ref{n_divides_W} $n$ divides $a$. 
  Let $|W(B)| = k-3 > 0$ and let  $z$ be a vertex from $W(B)$. Since $z$ adjacent to all vertices in $B$, then $z$ and $y$ has $3$ common neighbors  in $B$ and maybe there are $3$ common neighbors of $y$ and $z$ in $B_z'$. Then $a = |N(y)\cap N(z)|$ and $a \equiv 3$ or $6$ modulo $n$. It is a contradiction. Thus, there are only classes $B$ and $B'$ in $G$ and $a=0$. 
  
  If $|W(B)| = k-4 > 0$ then we have the same contradiction because $a = |N(y)\cap N(z)| \equiv 4$ or $1$ modulo $n$. Therefore, the case $a\neq 0$ is impossible and Proposition \ref{a not 0} is proved.
  $\square$\end{proof}
 
  Let's prove Theorem 3.  
  Let $G$ be a DDG with parameters 
  $(v,k,\lambda_1 ,\lambda_2 ,m,n)$. If $G$ is a disconnected DDG, then by \cite[Proposition 4.3]{EFHHH} each component of $G$ is a strongly regular graph with parameters $(v,k,\lambda)$. 
  There are no such graphs with $\lambda =k-2$.
 If  $G$ is a connected DDG with $0\in \{\lambda_1 ,\lambda_2\}$, then by Propositions \ref{a=0} $G$ is isomorphic to one of the following graphs: the incidence graph of the Fano plane, the non-incidence graph of the Fano plane $G$, and the complement of the three-dimensional binary cube.  By Proposition \ref{a not 0} there are no DDGs with  $k-2\in \{\lambda_1 ,\lambda_2\}$ and $0\notin \{\lambda_1 ,\lambda_2\}$.
 \bigskip
 
{\bf ACKNOWLEDGEMENTS}
\medskip

We are grateful to the Mikhail Lepchinskiy for useful remarks.
\medskip

{\bf ORCID}

Vladislav V. Kabanov http://orcid.org/0000-0001-7520-3302

Leonid Salaginov  https://orcid.org/0000-0001-6912-2493
\medskip

{\bf REFERENCES}


\begin{thebibliography}{99}

\bibitem{BCN} A. Brouwer, A. Cohen, A. Neumaier, Distance-regular graphs, Springer-Verlag, New York, 1989.

\bibitem{AB} A.~E.~Brouwer, Classification of small (0, 2)-graphs, Journal of Combinatorial Theory, Series A 113 (2006) 1636--1645.

\bibitem{BO} A.~E.~Brouwer, P.~R.~J.~\"{O}sterg\r{a}rd, Classification of the $(0,2)$-Graphs of Valency 8, Discrete Mathematics, 309 (2009) 532--547

\bibitem{dd} A. Deza, M. Deza, {\it The ridge graph of the metric polytope and some relatives}, Polytopes: Abstract, convex and computational, T. Bisztriczky et al. (Editors), NATO ASI Series, Kluwer Academic, 1994, 359--372.

\bibitem{EFHHH} M. Erickson, S. Fernando, W.H. Haemers, D. Hardy,
J. Hemmeter, Deza graphs: A generalization of strongly regular graphs, J. Comb. Designs, 7 (1999) 359--405.

\bibitem{GHKS} S.~Goryainov, W.H.~Haemers, V.V.~Kabanov, L.~Shalaginov,
Deza graphs with parameters $(n, k, k-1, a)$ and $\beta = 1$, J. Combin. Des. 27 (2019) 188--202.

\bibitem{gsh1} S.V. Goryainov, L.V. Shalaginov, On Deza grahps with 14, 15 and 16 vertices,
Siberian Electronic Mathematical Reports, 8 (2011) 105--115 (In Russian)

\bibitem{HKM} W.H. Haemers, H. Kharaghani, M. Meulenberg,
Divisible design graphs J. Combinatorial Theory A, 118 (2011) 978--992.

\bibitem{KMS} V.V.~Kabanov, N.V.~Maslova, L.V.~Shalaginov, On strictly Deza graphs with parameters $(n, k, k-1, a)$, arXiv:1712.09529v3,
accepted to journal European Journal of Combinatorics, https://www.sciencedirect.com/science/journal/01956698

\bibitem{MM} M.~Malder, $(0,\lambda)$-graphs and $n$-cube. Discrete Mathematics 28 (1979) 179--188.

\end{thebibliography}
\end{document}